\def \Hom{\operatorname{Hom}}
\def \Ext{\operatorname{Ext}}\def \End{\operatorname{End}}
\def \mod{\operatorname{mod}}\def \Mod{\operatorname{Mod}}
\def \add{\operatorname{add}}\def \Add{\operatorname{Add}}
\def \End{\operatorname{End}}
\def \add{\operatorname{add}}
\def \Prod{\operatorname{Prod}}
\def\C{\mathscr{C}}
\def\D{\mathscr{D}}
\def\G{\mathcal{G}}
\def\T{\mathrm{T}}
\def\Im{\mathop{\rm Im}\nolimits}
\def\mod{\mathop{\rm mod}\nolimits}
\def\Mod{\mathop{\rm Mod}\nolimits}
\def\Hom{\mathop{\rm Hom}\nolimits}
\def\Ext{\mathop{\rm Ext}\nolimits}
\def\add{\mathop{\rm add}\nolimits}
\newtheorem{theorem}{Theorem}[section]
\newtheorem{lemma}[theorem]{Lemma}
\newtheorem{proposition}[theorem]{Proposition}
\newtheorem{corollary}[theorem]{Corollary}
\newtheorem{definition}[theorem]{Definition}
\newtheorem{remark}[theorem]{Remark}
\begin{document}

\begin{center}
{\large  \bf Gorenstein categories and separable equivalences}
%\footnote {}
\\ \vspace{0.6cm} {Guoqiang Zhao}
\\ \footnotesize {\it Department of Mathematics, Hangzhou Dianzi University\\
Hangzhou, 310018, China\\ gqzhao@hdu.edu.cn}\\ 
 \vspace{0.6cm} {Juxiang Sun}
 \footnote {Corresponding author.}\\
\footnotesize {\it School of Mathematics and Statistics, Shangqiu Normal University\\
Shangqiu 476000, P. R. China\\ Sunjx8078@163.com}\\
\end{center}

\baselineskip=18pt
%\maketitle{}

\noindent {\bf Abstract.}
Let $\C$ be an additive subcategory of left $\Lambda$-modules, 
we establish relations of the orthogonal classes of $\C$ and (co)res $\widetilde{\C}$ under separable equivalences.
As applications, we obtain that the (one-sided) Gorenstein category
and Wakamatsu tilting module are preserved under separable equivalences.
Furthermore, we discuss when $G_{C}$-projective (injective) modules and 
Auslander (Bass) class with respect to $C$ are invariant under separable equivalences.
\vspace{0.2cm}

\noindent {\it Keywords}: separable equivalences; Gorenstein categories; Wakamatsu tilting module; Auslander (Bass) class.

\noindent Mathematics Subject Classification 2020: 16D20, 16E30, 16G10

\section{Introduction}
  
As a common generalization of Morita equivalences and the stable equivalences of Morita type, 
the notion of the separable equivalence was originally introduced by Linkelmans in \cite{L}. 
There are examples of algebras that are separably equivalent but not equivalent mentioned above.
%and has many interesting applications (see \cite{BE, BT, L, P1} for detail).
It has been known that two rings, which are (symmetrically) separably equivalent,
share many important properties,
such as global dimensions, finitistic dimensions, complexity, representation dimensions,
representation type  and so on (see \cite{BE, K, L, P, X1} for detail).

Gorenstein projective, Gorenstein injective modules and related modules are central in relative homological algebra (\cite{C, EJ1, EJ2, Ho, Wh}). 
As a deeper development, these notions were extended to the Gorenstein category $G(\C )$ by Sather-Wagstaff et al. \cite{SSW} 
and right (left) Gorenstein category $r\mathcal{G}(\C)$ ($l\mathcal{G}(\C)$) by Song et al. \cite{SZH}, 
where $\C$ is a subcategory of an abelian category.
For the case of $\C=\add T$, where $T$ is a finitely generated and self-orthogonal module, 
right Gorenstein category has been studied by Auslander and Reiten in \cite{AR} (with different notation $\mathcal{X}_{\C}$), 
and was showed that this category is resolving and functorially finite provided that $T$ is a cotilting module. 
Afterwards, these results were generalized to the case that $T$ is a Wakamatsu tilting module by Mantese and Reiten \cite{MR}.

In \cite{LX}, Liu and Xi started to study the relations between Gorenstein projective modules under the stable equivalence of Morita type,
and proved that Gorenstein projective modules are preserved under the stable equivalence of Morita type with two adjoint pairs.
The proof depended on the condition that having two adjoint pairs.
%Recently, more invariants of stable equivalences are obtained by Xi and Zhang \cite{XZ}.
The purpose of this paper is to remove this restricted condition,
and study whether Gorenstein categories are preserved under any separable equivalence.
Some applications are also given.

The paper is organized as follows. 
In Section 2, we give some terminology and preliminary results which are often used in this paper.
In Section 3, for an additive subcategory $\C$ of left $\Lambda$-modules, 
we first prove that the orthogonal classes of $\C$ and (co)res $\widetilde{\C}$ are preserved under any separable equivalence.
As applications, we obtain that the (one-sided) Gorenstein category and Wakamatsu tilting module are preserved under separable equivalences. 
Furthermore, we also investigate the invariant properties of $G_{C}$-projective (injective) modules and Auslander (Bass) class 
with respect to $C$ under separable equivalences, 
and obtain more invariants for separable equivalences.

\vspace{0.5cm}

\section{Preliminaries}

Throughout this paper,  all rings are nontrivial  associative rings, and all modules are  left modules unless stated otherwise.
Let $\Lambda$ be a ring, we denote by $\Mod \Lambda$ (resp. $\mod \Lambda$) 
the category of all  (resp. finitely generated) left $\Lambda$-modules.
Denoted by $\mathcal{P}(\Lambda)$ (resp. $\mathcal{I}(\Lambda)$) the subcategory of 
$\Mod \Lambda$ consisting of all projective (resp. injective)$\Lambda$-modules.
Let $M$ be a $\Lambda$-module,  we denote by $\Add M$ (resp. $\add M$) 
the subcategory of $\Mod \Lambda$ (resp. $\mod \Lambda$) 
consisting of all direct summands of (resp. finite) direct sums of copies of $M$, 
and $\Prod M$ the subcategory of $\Mod \Lambda$ 
consisting of all direct summands of direct products of copies of $M$.

%Peacock in [21] introduced the notion of separable equivalences between exact categories as follows.

%\begin{definition} {\rm (\cite{P}}
%Let $\A$ and $\B$ be abelian categories. $\A$ and $\B$ are said to be separably equivalent, 
%if there exist exact functors $F : \A \to \B$ and $G : \B \to \A$ such that

%$(1)$ the image of a projective object is projective;

%$(2)$ the identity functor is a summand of $GF$ and of $FG$.
%\end{definition}

%Two rings $\Lambda$ and $\Gamma$ are separably equivalent if and only if their module categories are separably equivalent.

\begin{definition} {\rm (\cite{BE, L}}
Let $\Lambda$ and $\Gamma$ be two rings, 
and let $_{\Lambda}M_{\Gamma}$ and $_{\Gamma}N_{\Lambda}$
be finitely generated and projective as one-sided modules.
We say that $\Gamma$ separably divides $\Lambda$, if there are $\Lambda$-bimodule $_{\Lambda}X_{\Lambda}$ and
$\Lambda$-bimodule isomorphism $M\otimes_{\Gamma}N\cong \Lambda\oplus X$.

If, in addition,  there exist $\Gamma$-bimodule $_{\Gamma}Y_{\Gamma}$ and  $\Gamma$-bimodule isomorphism    $N\otimes_{\Lambda}M\cong \Gamma\oplus Y$,
then $\Lambda$ and $\Gamma$ are said to be separably equivalent.

%$(2)$ We say that $\Gamma$ symmetrically separably divides $\Lambda$, if $\Gamma$ separably divides $\Lambda$ and $(M\otimes_{\Gamma}-, N\otimes_{\Lambda}-)$ is an adjoint pair between $\Lambda$-modules and $\Gamma$-modules. Moreover, if $\Lambda$ symmetrically separably divides $\Gamma$ linked by  $_{\Lambda}M_{\Gamma}$ and $_{\Gamma}N_{\Lambda}$ such that $(N\otimes_{\Lambda}-, M\otimes_{\Gamma}-)$ also is an adjoint pair, then $\Lambda$ and $\Gamma$ are said to be symmetrically separably equivalent. 
\end{definition}

\begin{remark} 
{\rm Let $\Lambda$ and $\Gamma$ be separably equivalent defined as in Definition 2.1. 

 $(i)$ If $_\Lambda X_\Lambda$ and $_\Gamma Y_\Gamma$ are the zero modules, 
 then $\Lambda$ is Morita equivalent to $\Gamma$;

$(ii)$ If  $_\Lambda X_\Lambda$ and $_\Gamma Y_\Gamma$ are projective as bimodules, 
then $\Lambda$ and $\Gamma$ are stably equivalent of Morita type.}
\end{remark}

%\begin{example} \cite{BT,L,P1} 
%(1) For a ring $\Lambda$, $M_n(\Lambda)$ ( the matrix ring of $\Lambda$ degree in $n$ ) and $\Lambda$ are separably equivalent;

%(2) Let $k$ be a field of characteristic $p>0$, and let $G$ be a finite group. If $H$ is a Sylow $p$-subgroup of $G$, then $kG$ and $kH$ are separably equivalent;

% (3) Let $\Lambda$ be a ring and $G$ be a finite group. If $|G|^{-1}\in \Lambda$, then the skew group ring $\Lambda G$ is symmetrically separably equivalent to $\Lambda$;

% (4) Let $\Lambda$ be a finite dimensional $k$-algebra, and  $F$ be  a finite separable field extension of $k$, then $\Lambda$ and $\Lambda\otimes_{k}F$ are symmetrically separably equivalent.
%\end{example}

%More examples of (symmetric) separable equivalences can be found in \cite{P1}.

Suppose that $\Lambda$ and $\Gamma$ are separably equivalent defined as in Definition 2.1.
In the following, we write $\mathrm{T}_M=M\otimes_{\Gamma}-: \Mod\Gamma\to \Mod\Lambda$,
and $\mathrm{T}_N=N\otimes_{\Lambda}-: \Mod\Lambda\to \Mod\Gamma$.
Similarly, we have the functors $\mathrm{T}_X$ and $\mathrm{T}_Y$.

\begin{lemma} 
Suppose that $\Lambda$ and $\Gamma$ are separably equivalent defined by $_{\Lambda}M_\Gamma$ and $_\Gamma N_\Lambda$,
then

$(1)$ $_\Lambda M_\Gamma$ and $_\Gamma N_{\Lambda}$ are projective generators as one-sided modules;

$(2)$ $\mathrm{T}_M$ and $\mathrm{T}_N$ are exact functors such that the image of a projective module is projective;
%$\mathrm{T}_X$, $\mathrm{T}_Y$: projective module to projective.

$(3)$ $\mathrm{T}_M \circ  \mathrm{T}_N\to \mathrm{Id}_{\Mod \Lambda }\oplus \mathrm{T}_X$ and
$\mathrm{T}_N \circ  \mathrm{T}_M\to \mathrm{Id}_{\Mod \Gamma}\oplus \mathrm{T}_Y$ are natural isomorphisms. 
\end{lemma}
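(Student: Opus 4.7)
The plan is to prove the three parts in the stated order, with each one being a formal consequence of unpacking the bimodule data from Definition 2.2.

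For part (1), the one-sided projectivity of $M$ and $N$ is part of the hypothesis, so it suffices to verify the generator property on each side. I would exploit the $\Lambda$-bimodule isomorphism $M\otimes_{\Gamma}N\cong \Lambda\oplus X$: since $_\Gamma N$ is finitely generated projective, it is a direct summand of some $\Gamma^n$, so applying $M\otimes_\Gamma -$ shows that $M\otimes_\Gamma N$ is a summand of $M^n$ as a left $\Lambda$-module. Combined with the bimodule isomorphism, $_\Lambda\Lambda$ becomes a summand of $_\Lambda M^n$, so $M$ generates $\Mod\Lambda$. The symmetric argument, applied to $N\otimes_\Lambda M\cong \Gamma\oplus Y$ and the fact that $N_\Lambda$ is a summand of some $\Lambda^m$, shows that $M$ is a generator as a right $\Gamma$-module. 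The two statements for $N$ follow by interchanging the roles of $M$ and $N$.

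For part (2), the functor $\mathrm{T}_M = M\otimes_\Gamma -$ is exact because $M_\Gamma$ is finitely generated projective, hence flat. To see that $\mathrm{T}_M$ preserves projectives, note that any projective left $\Gamma$-module $P$ is a summand of some free module $\Gamma^{(I)}$, and $\mathrm{T}_M(\Gamma^{(I)})\cong M^{(I)}$ is projective as a left $\Lambda$-module since $_\Lambda M$ is projective; thus $\mathrm{T}_M(P)$ is a summand of a projective and is itself projective. The same reasoning, with the roles of $M$ and $N$ swapped, handles $\mathrm{T}_N$.

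For part (3), the associativity of tensor products yields, for each left $\Lambda$-module $A$, a natural isomorphism
\[
\mathrm{T}_M\mathrm{T}_N(A) = M\otimes_\Gamma (N\otimes_\Lambda A) \cong (M\otimes_\Gamma N)\otimes_\Lambda A.
\]
Substituting the $\Lambda$-bimodule decomposition $M\otimes_\Gamma N\cong \Lambda\oplus X$ and distributing the tensor over the direct sum gives $A\oplus (X\otimes_\Lambda A) = \mathrm{Id}_{\Mod\Lambda}(A)\oplus \mathrm{T}_X(A)$; naturality in $A$ is automatic because the whole chain of isomorphisms is obtained by tensoring a fixed bimodule isomorphism with the variable module $A$. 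The second natural isomorphism $\mathrm{T}_N\mathrm{T}_M\cong \mathrm{Id}_{\Mod\Gamma}\oplus \mathrm{T}_Y$ is obtained by the mirror computation using $N\otimes_\Lambda M\cong \Gamma\oplus Y$.

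I do not anticipate any serious obstacle: the three parts are formal consequences of the bimodule data in Definition 2.2, and the only point requiring a little care is keeping track of the bimodule structures so that the decomposition in part (3) really is natural in $A$ on the correct side.
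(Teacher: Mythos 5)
Your proposal is correct and follows essentially the same route as the paper: part (1) is exactly the paper's argument (the paper writes $\Lambda\in\add_{\Lambda}(M\otimes_\Gamma N)\subseteq\add_{\Lambda}(M\otimes_\Gamma\Gamma)\subseteq\add_{\Lambda}M$ and treats the remaining cases as symmetric), while parts (2) and (3) are declared ``clear'' in the paper and your spelled-out arguments (flatness of a finitely generated projective one-sided module, tensor associativity applied to the fixed bimodule isomorphisms) are the intended ones.
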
 

\begin{proof}
(1) We only prove that $_{\Lambda}M$ is a projective generator of $\Mod \Lambda$.
Because $_\Gamma N$ is a finitely generated projective $\Gamma$-module, we have $_{\Gamma}N\in \add_{\Gamma}\Gamma$.
Thus $\Lambda\in\add_{\Lambda}(M\otimes_\Gamma N)\subseteq \add_{\Lambda}(M\otimes_{\Gamma}\Gamma)\subseteq \add _{\Lambda}M$ as ragarded.

(2) and (3) are clear.
\end{proof}

Let $\C$ be an additive subcategory of an abelian category $\mathscr{A}$. 
Recall that an exact sequence in $\mathscr{A}$ is said to be 
{\it $\Hom_{\mathscr{A}}(-, \C)$-exact} (resp. {\it $\Hom_{\mathscr{A}}(\C, -)$-exact}), 
if it remains exact after applying the functor
$\Hom_{\mathscr{A}}(-, C)$ (resp. {\it $\Hom_{\mathscr{A}}(C, -)$-exact}) for all $C\in \C$. 
Following \cite{SSW}, we write 
\begin{center}
$\C{^\bot}= $  $\{A\in \mathscr{A}$ $|$ $\Ext^{\geq 1}_{\mathscr{A}}(C, A)= 0$ for all $C\in\C$ $\}$,
\end{center}
 and
 \begin{center}
$\mathrm{res} \widetilde{\C}$ = $\{A\in \mathscr{A}$ $|$ there exists a $\Hom_{\mathscr{A}}(\C , -)$-exact exact sequence 
$\cdots \to C_{i}\to\cdots\to C_{1}\to C_{0}\to A\to 0$ in $\mathscr{A}$ with all $C_{i}\in \C \}$. 
\end{center}

${^\bot}\C$ and $\mathrm{cores} \widetilde{\C}$ can be defined dually.
$\C$ is called self-orthogonal if $\C\subseteq {^\bot}\C$.

\begin{definition} {\rm(\cite{SSW})
The Gorenstein category $\mathcal{G}(\C)$ is defined as
$\mathcal{G}(\C) =\{A\in$ $\mathscr{A}$ $\mid$
there exists an exact sequence: $$\cdots \to C_1 \to C_0 \to C^0 \to
C^1 \to \cdots $$ in $\C$, which is both
$\Hom_{\mathscr{A}}(\C,-)$-exact and
$\Hom_{\mathscr{A}}(-,\C)$-exact, such that $A\cong
\Im(C_0\to C^0)\}$.}
\end{definition}

\begin{definition} {\rm(\cite{SZH})
We call $r\mathcal{G}(\C) = {^\bot}\C \cap \mathrm{cores} \widetilde{\C}$ and
$l\mathcal{G}(\C) = \C{^\bot} \cap \mathrm{res} \widetilde{\C}$  
the right and left Gorenstein subcategory of $\mathscr{A}$ relative to $\C$, respectively.}
\end{definition}

\begin{remark}
{\rm Let $\mathscr{A}= \Mod \Lambda$, from \cite[Example 3.2]{SZH} we have

 (1) If $\C=\mathcal{P}(\Lambda)$ (resp. $\mathcal{I}(\Lambda)$), 
then $r\G(\C)$ (resp. $l\G(\C)$) is the subcategory of Mod $\Lambda$ consisting of Gorenstein projective (resp. injective) modules.

(2) Let $_{\Lambda}C$ be a Wakamatsu tilting module with $\Gamma= \End{_\Lambda}C$.

(i) If $\C=\Add C$, then $r\G(\C)$ is the subcategory $\mathcal{GP}_{C}(\Lambda)$ of Mod $\Lambda$ consisting of $G_{C}$-projective modules, 
and $l\G(\C)$ is the Bass class $\mathscr{B}_{C}(\Lambda)$ with respect to $C$.
 
 (ii) If $\C =\Prod C^{+}$, where $C^{+} = \Hom_{Z}(C, Q/Z)$,
then $l\G(\C)$ is the subcategory $\mathcal{GI}_{C}(\Gamma)$ of Mod $\Gamma$ consisting of $G_{C}$-injective modules, 
and $r\G(\C)$ is the Auslander class $\mathscr{A}_{C}(\Gamma)$ with respect to $C$.}
\end{remark}

%Recall from \cite{BM} that {\it the left Gorenstein global dimension} of a ring $\Lambda$,
%denoted by l.Ggl.dim$\Lambda$, is defined as sup$\{\Gpd_{\Lambda}K |K\in \Mod \Lambda \}.$

\begin{definition} {\rm(\cite{MR})
A self-orthogonal module $_{\Lambda}C$ is called Wakamatsu (or generalized) tilting, if the following conditions are satisfied:

(1) $_{\Lambda}C\in$ $\mathrm{res} (\widetilde{\add \Lambda})$.

(2) $_{\Lambda}\Lambda\in$ $\mathrm{cores} (\widetilde{\add C})$.}
%there exists an exact sequence
%$ 0\to {_\Lambda}\Lambda\to T_0 \to T_1\to \cdots \to T_m \to  0$
%in $\mod \Lambda$ with all $T_i \in \add T.$ 
\end{definition}  

By \cite[Corollary 3.2]{Wa}, we have that $_{\Lambda}C$ is Wakamatsu tilting with $\Gamma = \End(_{\Lambda}C)$ if and only if 
$C_{\Gamma}$ is Wakamatsu tilting with $\Lambda = \End(C_{\Gamma})$. 
In particular, it is trivial that $_{\Lambda}\Lambda$ is a Wakamatsu tilting module.

\section{Main Results}

From now on, we assume that $\Lambda$, $\Gamma$, $M$, $N$, $X$ and $Y$ are fixed, as in Definition 2.1, 
and $\C$ is an additive subcategory of $\Mod \Lambda$ closed under isomorphisms.
%In the following, we study the properties of $r\G(\C)$ under separable equivalences. 
%The dual versions of all the results on $r\G(\C )$ also hold true on $l\G(\C)$ by using completely dual arguments.

We begin with the following two lemmas, which play an important role in the sequel.

\begin{lemma}
Let $\Lambda$ and $\Gamma$ be separably equivalent defined by $_{\Lambda}M_\Gamma$ and $_\Gamma N_\Lambda$,
and let $G$ be a $\Lambda$-module. Then 

$(1)$ $G\in$ ${^\bot}\C$ if and only if $\T_N(G) \in$ ${^\bot}\T_N(\C)$. 

$(2)$ $G\in$ $\C{^\bot}$ if and only if $\T_N(G) \in$ $\T_N(\C){^\bot}$. 

In particular, $\C$ is self-orthogonal if and only if so is $\T_N(\C)$.
\end{lemma}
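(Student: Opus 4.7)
The plan rests on three facts from Lemma 2.5: $\T_N$ is exact, preserves projective modules, and $\T_M\T_N\cong \mathrm{Id}_{\Mod\Lambda}\oplus \T_X$ naturally (with the symmetric statement on the $\Gamma$-side). The ``in particular'' clause will follow from (1) by letting $G$ run through $\C$, so the main work is (1); part (2) is the dual assertion, obtained by swapping the roles of the two Hom-arguments.

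For the direction $\T_N(G)\in {^\bot}\T_N(\C)\Rightarrow G\in {^\bot}\C$, the plan is to exhibit, for every $A,B\in \Mod\Lambda$ and $i\ge 0$, a natural split monomorphism $\Ext^i_\Lambda(A,B)\hookrightarrow \Ext^i_\Gamma(\T_N A,\T_N B)$ realizing the former as a direct summand. At Hom-level the embedding will be $f\mapsto \T_N f$, retracted by $g\mapsto \pi_B\circ \T_M g\circ \iota_A$, where $\iota_A\colon A\hookrightarrow \T_M\T_N A$ and $\pi_B\colon \T_M\T_N B\twoheadrightarrow B$ are the canonical inclusion/projection coming from $\T_M\T_N\cong \mathrm{Id}\oplus \T_X$; naturality of this isomorphism gives $\pi_B\circ \T_M(\T_N f)\circ \iota_A=f$. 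Choosing a projective resolution $P_\bullet\to A$ (so that $\T_N P_\bullet\to \T_N A$ is a projective resolution in $\Mod\Gamma$) and applying the retraction levelwise produces a chain-level split mono that descends to cohomology, realizing $\Ext^i_\Lambda(A,B)$ as a direct summand of $\Ext^i_\Gamma(\T_N A,\T_N B)$. Hence vanishing of $\Ext^{\ge 1}_\Gamma(\T_N G,\T_N C)$ forces vanishing of $\Ext^{\ge 1}_\Lambda(G,C)$.

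For the converse, $G\in {^\bot}\C\Rightarrow \T_N(G)\in {^\bot}\T_N(\C)$, the plan is to combine the adjunction $(\T_N,\Hom_\Gamma(N,-))$ with the exactness and projective-preservation of $\T_N$ to obtain
\[
\Ext^i_\Gamma(\T_N G,\T_N C)\;\cong\;\Ext^i_\Lambda\!\bigl(G,\Hom_\Gamma(N,\T_N C)\bigr),
\]
and then to use the Hom-tensor identity (for $_\Gamma N$ finitely generated projective) to rewrite $\Hom_\Gamma(N,\T_N C)\cong \Hom_\Gamma(N,N)\otimes_\Lambda C$. The hard part is bimodule-theoretic: one must use the separable-equivalence identity $M\otimes_\Gamma N\cong \Lambda\oplus X$ to produce a natural $(\Lambda,\Lambda)$-bimodule splitting of the canonical inclusion $\Lambda\hookrightarrow \Hom_\Gamma(N,N)$, $\lambda\mapsto R_\lambda$, so that $\Hom_\Gamma(N,\T_N C)$ decomposes naturally as $C\oplus W_C$, with the complement $W_C$ already killed in Ext by $G\in {^\bot}\C$. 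This propagates $\Ext^{\ge 1}_\Lambda(G,C)=0$ to $\Ext^{\ge 1}_\Gamma(\T_N G,\T_N C)=0$; part (2) follows by the dual adjunction argument applied in the second slot.
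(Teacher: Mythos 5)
Your first direction is fine: the split monomorphism $\Ext^i_\Lambda(A,B)\hookrightarrow\Ext^i_\Gamma(\T_N A,\T_N B)$ given on cocycles by $f\mapsto\T_N f$, with retraction $g\mapsto\pi_B\circ\T_M(g)\circ\iota_A$ coming from $\T_M\T_N\cong\mathrm{Id}\oplus\T_X$, is correct and is essentially the paper's own mechanism repackaged; it handles $\T_N(G)\in{}^\bot\T_N(\C)\Rightarrow G\in{}^\bot\C$ cleanly (the paper does this by noting $G\oplus\T_X(G)\in{}^\bot(C\oplus\T_X(C))$).

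The gap is in the implication $G\in{}^\bot\C\Rightarrow\T_N(G)\in{}^\bot\T_N(\C)$. Your adjunction step correctly identifies $\Ext^i_\Gamma(\T_N G,\T_N C)$ with $\Ext^i_\Lambda(G,\Hom_\Gamma(N,N)\otimes_\Lambda C)$, but the conclusion does not follow. First, the $(\Lambda,\Lambda)$-bimodule splitting of $\Lambda\to\Hom_\Gamma(N,N)$ is not supplied by the hypothesis $M\otimes_\Gamma N\cong\Lambda\oplus X$ unless you also know $M\cong\Hom_\Gamma(N,\Gamma)$ as bimodules --- which is exactly the adjoint-pair assumption this paper is written to remove. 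Second, and fatally, even granting a splitting $\Hom_\Gamma(N,N)\cong\Lambda\oplus W$, the complement contributes $\Ext^i_\Lambda(G,W\otimes_\Lambda C)$; since $\C$ is an arbitrary additive subcategory, $W\otimes_\Lambda C$ has no reason to lie in $\C$, so $G\in{}^\bot\C$ says nothing about it. The claim that $W_C$ is ``already killed in Ext by $G\in{}^\bot\C$'' is precisely the missing content, not a consequence. The paper sidesteps this by never converting $\Ext^i_\Gamma(\T_N G,\T_N C)$ into a $\Lambda$-side Ext group: given a map $g\colon\T_N(K_{i+1})\to\T_N(C)$ it applies $\T_M$, extracts only the $(1,1)$-component $\alpha_1\colon K_{i+1}\to C$ of $\T_M(g)$ under $\T_M\T_N\cong\mathrm{Id}\oplus\T_X$ (this component \emph{does} land in $\C$), extends $\alpha_1$ over $P_i$ using $G\in{}^\bot\C$, and then recovers $g=\T_N(\alpha_1)$ from the diagonal form $\T_N\T_M(g)\cong g\oplus\T_Y(g)$, so that $g$ itself extends over $\T_N(P_i)$. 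You need this component-extraction device (or an equivalent one) for the forward implication, and the same correction applies to your treatment of part (2), where in addition one must check that $\T_N$ preserves injectives before dualizing.
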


\begin{proof}
(1) Suppose $G\in$ ${^\bot}\C$, and
let $$\cdots\to P_{1}\stackrel{f_{1}}\to P_0\stackrel{f_{0}}\to G\to 0 \qquad(3.1)$$
be a projective resolution of $G$.
From Lemma 2.3(2), applying $\T_N$ to (3.1) gives rise to a projective resolution of $\T_N(G)$
$$\cdots\to \T_N(P_{1})\stackrel{\T_N(f_{1})}\to \T_N(P_0)\stackrel{\T_N(f_{0})}\to \T_N(G)\to 0 \qquad(3.2)$$
We claim that the sequence (3.2) is $\Hom_{\Gamma}(-, \T_N(C))$-exact for any $C\in\C$,
which means that $\T_N(G) \in$ ${^\bot}\T_N(\C)$.
In fact, for each $i\geq 0$, let $K_{i} = \Im f_{i}$, from the exact sequence
$0\to K_{i+1}\stackrel{\iota_{i}}\rightarrow P_{i}\stackrel{\pi_{i}} \to K_{i}\to 0$
one has the exact sequence
$$0\to \T_N(K_{i+1})\stackrel{\T_N(\iota_{i})}\rightarrow \T_N(P_{i})\stackrel{\T_N(\pi_{i})} \to \T_N(K_{i})\to 0 \qquad(3.3)$$

Suppose $g\in\Hom_{\Gamma}(\T_N(K_{i+1}), \T_N(C))$. 
Then $\T_M(g)\in$ $\Hom_{\Lambda}(\T_{M}\T_N(K_{i+1}), \T_{M}\T_N(C))$ $\cong$
$\Hom_{\Lambda}(K_{i+1}\oplus \T_{X}(K_{i+1}), C\oplus \T_{X}(C))$.
Let $\T_M(g) = \left(\begin{array}{cc}
                \alpha_{1} & \alpha_{2} \\
                 \alpha_{3} & \alpha_{4} \\
               \end{array}
             \right)$, then $\alpha_{1}\in$ $\Hom_{\Lambda}(K_{i+1}, C)$.
Because $G\in$ ${^\bot}\C$, the sequence (3.1) is $\Hom_{\Lambda}(-, C)$-exact,
and hence $\Hom_{\Lambda}(\iota_{i}, C):$ $\Hom_{\Lambda}(P_{i}$, $C)\to \Hom_{\Lambda}(K_{i+1}, C)$ is an epimorphism.
Thus there exists $h\in$ $\Hom_{\Lambda}(P_{i}, C)$, such that $\alpha_{1} = h\iota_{i}$.

On the other hand, we have
 $$\T_N\T_M(g)=
\left(\begin{array}{cc}
                g & 0 \\
                 0 & \mathrm{T}_Y(g) \\
               \end{array}
             \right)$$
            
Therefore $g = \T_N(\alpha_{1}) = \T_N(h)\T_N(\iota_{i})$,  
which implies that the sequence (3.3) is $\Hom_{\Gamma}(-$, $\T_N(C))$-exact,
and so we obtain the claim.               
               
Conversely, assume $\T_N(G) \in$ ${^\bot}\T_N(\C)$.
By a similar argument of the \lq\lq if\rq\rq part, for any $C\in \C$, one has $\T_M\T_N(G) \in$ ${^\bot}\T_M\T_N(C)$.
Since $\T_M\T_N = \mathrm{Id}_{\Mod \Lambda }\oplus \T_X$, then $G\in$ ${^\bot}(C\oplus\T_X(C))$,
and so $\Ext^{\geq 1}_{\Lambda}(G, C)= 0$, as desired.    

(2) From the proof of \cite[Theorem 4.1]{X}, 
the conclusion that the image of a injective module under the functor $\T_N$ is injective
remains true in our setting. By a dual argument of (1), one has the assertion.     
\end{proof}

\begin{lemma}
Let $\Lambda$ and $\Gamma$ be separably equivalent linked by $_{\Lambda}M_\Gamma$ and $_\Gamma N_\Lambda$.

$(1)$ If $A\in$ $\mathrm{cores} \widetilde{\C}$, then $\T_N(A) \in$ $\mathrm{cores} \widetilde{\T_{N}(\C)}$.

$(2)$ If $A\in$ $\mathrm{res} \widetilde{\C}$, then $\T_N(A) \in$ $\mathrm{res} \widetilde{\T_{N}(\C)}$.
\end{lemma}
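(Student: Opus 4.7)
The plan is to adapt the matrix argument from Lemma 3.1 to the (co)resolution context. Both parts have the same shape: because $\T_N$ is exact, applying $\T_N$ to any given (co)resolution automatically produces an exact sequence in $\Mod\Gamma$ of the required form with terms in $\T_N(\C)$, so the only real content is verifying the $\Hom$-exactness conditions with respect to $\T_N(\C)$.

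For part (2), I would take a $\Hom_\Lambda(\C,-)$-exact resolution $\cdots\to C_1\to C_0\to A\to 0$ with $C_i\in\C$ and split it into short exact sequences $0\to K_{i+1}\stackrel{\iota_i}\to C_i\stackrel{\pi_i}\to K_i\to 0$ with $K_0=A$. Applying $\T_N$ to each such short exact sequence, it will suffice to show that $0\to\T_N(K_{i+1})\to\T_N(C_i)\to\T_N(K_i)\to 0$ is $\Hom_\Gamma(\T_N(C),-)$-exact for every $C\in\C$. Given $g\in\Hom_\Gamma(\T_N(C),\T_N(K_i))$, I would apply $\T_M$ and use $\T_M\T_N\cong\mathrm{Id}_{\Mod\Lambda}\oplus\T_X$ to view $\T_M(g)$ as a morphism $C\oplus\T_X(C)\to K_i\oplus\T_X(K_i)$ whose $(1,1)$-component is some $\alpha_1\in\Hom_\Lambda(C,K_i)$. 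The $\Hom_\Lambda(\C,-)$-exactness of the original resolution then produces $h\in\Hom_\Lambda(C,C_i)$ with $\pi_i h=\alpha_1$. Applying $\T_N$ to this identity and invoking $\T_N\T_M\cong\mathrm{Id}_{\Mod\Gamma}\oplus\T_Y$, under which $g$ appears as the $(1,1)$-entry of $\T_N\T_M(g)$ exactly as in the closing step of the proof of Lemma 3.1, one recovers $\T_N(\pi_i)\T_N(h)=\T_N(\alpha_1)=g$, giving the desired lift.

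Part (1) is strictly dual. Starting from a $\Hom_\Lambda(-,\C)$-exact coresolution $0\to A\to C^0\to C^1\to\cdots$ with $C^i\in\C$, I would split it into short exact sequences $0\to K^i\to C^i\to K^{i+1}\to 0$ with $K^0=A$ and apply $\T_N$. For $C\in\C$ and $g\in\Hom_\Gamma(\T_N(K^i),\T_N(C))$, the $(1,1)$-component of $\T_M(g)$ is some $\alpha_1\in\Hom_\Lambda(K^i,C)$, which by $\Hom_\Lambda(-,C)$-exactness of the original coresolution factors through the inclusion $K^i\hookrightarrow C^i$; pushing this factorization through $\T_N$ and reading off the $(1,1)$-entry of $\T_N\T_M(g)$ exhibits the required extension of $g$.

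The only point at which this could go wrong is the bookkeeping with the two natural isomorphisms $\T_M\T_N\cong\mathrm{Id}\oplus\T_X$ and $\T_N\T_M\cong\mathrm{Id}\oplus\T_Y$: one must know that, under these identifications, $g$ really is recovered as the $(1,1)$-entry of $\T_N\T_M(g)$. This compatibility is precisely the diagram chase already performed in Lemma 3.1, so once that identification is granted the rest of the present proof is routine and no new obstacle arises.
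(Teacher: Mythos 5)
Your proposal is correct and follows essentially the same route as the paper: apply the exact functor $\T_N$ to the given (co)resolution, then verify the required $\Hom$-exactness by applying $\T_M$, extracting the $(1,1)$-component via $\T_M\T_N\cong\mathrm{Id}\oplus\T_X$, lifting or extending that component using the $\Hom$-exactness of the original sequence, and recovering $g$ as the $(1,1)$-entry of $\T_N\T_M(g)$. The paper writes this out only for the first cosyzygy and says ``continuing the process,'' whereas you organize it uniformly over all the short exact sequence pieces, but the argument is the same.
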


\begin{proof}
We only prove (1), and (2) can be proved dually.

Suppose $A\in$ $\mathrm{cores} \widetilde{\C}$, then there is a $\Hom_{\Lambda}(-, \C)$-exact sequence
$$0\to A\stackrel{f^{0}}\to C^{0}\stackrel{f^{1}}\to C^{1}\to \cdots \qquad(3.4)$$

From Lemma 2.3(2), applying $\T_N$ to (3.4) gives rise to an exact sequence
$$0\to \T_N(A)\stackrel{\T_N(f^{0})}\to \T_N(C^{0})\stackrel{\T_N(f^{1})}\to \T_N(C^{1})\to \cdots \qquad(3.5)$$
It is suffice to show that the sequence (3.5) is $\Hom_{\Gamma}(-, \T_N(C))$-exact for any $C\in\C$.
Indeed, for each $i\geq 1$, let $K^{i} = \Im f^{i}$, then $ \Im \T_{N}(f^{i})\cong$ $\T_{N}(K^{i})$.
Consider the exact sequence
$$0\to \T_N(A)\stackrel{\T_N(f^{0})}\rightarrow \T_N(C{^{0}})\stackrel{\T_N(\pi)} \to \T_N(K^{1})\to 0 \qquad(3.6)$$

Let $g\in\Hom_{\Gamma}(\T_N(A), \T_N(C))$. 
Then $\T_M(g)\in$ $\Hom_{\Lambda}(\T_{M}\T_N(A), \T_{M}\T_N(C))$ $\cong$
$\Hom_{\Lambda}(A$ $\oplus \T_{X}(A), C\oplus \T_{X}(C)$.
Write $\T_M(g) = \left(\begin{array}{cc}
                \beta_{1} & \beta_{2} \\
                 \beta_{3} & \beta_{4} \\
               \end{array}
             \right)$, then $\beta_{1}\in$ $\Hom_{\Lambda}(A, C)$.
Because the sequence (3.4) is $\Hom_{\Lambda}(-, C)$-exact,
the sequence $\Hom_{\Lambda}(C^{0}, C)\stackrel{\Hom_{\Lambda}(f^{0}, C)}\longrightarrow \Hom_{\Lambda}(A, C)\to 0$ is exact.
Thus there exists $h\in$ $\Hom_{\Lambda}(C^{0}, C)$ such that $ \beta_{1} = hf^{0}$.

On the other hand, we have
 $$\T_N\T_M(g)=
\left(\begin{array}{cc}
                g & 0 \\
                 0 & \mathrm{T}_Y(g) \\
               \end{array}
             \right)$$
            
Therefore $g = \T_N( \beta_{1}) = \T_N(h)\T_N(f^{0})$,  
which implies that the sequence (3.6) is $\Hom_{\Gamma}(-$, $\T_N(C))$-exact.
Continuing the process, we obtain the assertion.  
\end{proof}

\begin{theorem} 
Let $\Lambda$ and $\Gamma$ be separably equivalent linked by $_{\Lambda}M_\Gamma$ and $_\Gamma N_\Lambda$,
and let $G$ be a $\Lambda$-module. 

$(1)$ If $G\in$ $r\mathcal{G}(\C)$, then $\T_{N}(G) \in$ $r\mathcal{G}(\T_{N}(\C))$.

$(2)$ If $H\in$ $l\mathcal{G}(\C)$, then $\T_{N}(H) \in$ $l\mathcal{G}(\T_{N}(\C))$.
\end{theorem}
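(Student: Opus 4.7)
The plan is to recognize that Theorem 3.3 is essentially a bookkeeping consequence of Lemmas 3.1 and 3.2, since the right and left Gorenstein subcategories are defined exactly as intersections of the orthogonal and (co)resolution classes that those two lemmas address.

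For part (1), I would unpack the definition $r\mathcal{G}(\C) = {^\bot}\C \cap \mathrm{cores}\,\widetilde{\C}$, so that $G \in r\mathcal{G}(\C)$ splits into the two conditions $G \in {^\bot}\C$ and $G \in \mathrm{cores}\,\widetilde{\C}$. Lemma 3.1(1) then transports the first condition across $\T_N$ to yield $\T_N(G) \in {^\bot}\T_N(\C)$, while Lemma 3.2(1) transports the second to $\T_N(G) \in \mathrm{cores}\,\widetilde{\T_N(\C)}$. Intersecting these two memberships gives $\T_N(G) \in {^\bot}\T_N(\C) \cap \mathrm{cores}\,\widetilde{\T_N(\C)} = r\mathcal{G}(\T_N(\C))$, which is the conclusion.

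For part (2), I would proceed entirely dually: decompose $l\mathcal{G}(\C) = \C^{\bot} \cap \mathrm{res}\,\widetilde{\C}$, apply Lemma 3.1(2) to push $H \in \C^{\bot}$ to $\T_N(H) \in \T_N(\C)^{\bot}$, and apply Lemma 3.2(2) to push $H \in \mathrm{res}\,\widetilde{\C}$ to $\T_N(H) \in \mathrm{res}\,\widetilde{\T_N(\C)}$, then combine.

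Honestly, there is no real obstacle here since all the technical content has already been extracted into Lemmas 3.1 and 3.2; the theorem is essentially a one-line corollary of those two results plus the definitions in 2.5. The only point worth a sentence of emphasis in the write-up is that the splitting isomorphism $\T_M\T_N \cong \mathrm{Id}_{\Mod \Lambda} \oplus \T_X$ (and its $\Gamma$-side analogue), which was the engine of both preceding lemmas, is not invoked again directly in this proof -- it has already done its work.
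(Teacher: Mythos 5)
Your proposal is correct and matches the paper's own proof exactly: the paper also deduces Theorem 3.3 directly from the definition of the one-sided Gorenstein categories together with Lemmas 3.1 and 3.2. Your write-up simply makes explicit the same one-line argument the paper gives.
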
 

\begin{proof}
By the definition of one-sided Gorenstein category, the assertion follows from Lemma 3.1 and 3.2.
\end{proof}

The following result shows that Gorenstein categories $\mathcal{G}(\C)$ are preserved 
under any separable equivalence.

\begin{theorem}
Let $\Lambda$ and $\Gamma$ be separably equivalent linked by $_{\Lambda}M_\Gamma$ and $_\Gamma N_\Lambda$.
If $G\in$ $\mathcal{G}(\C)$, then $\T_{N}(G) \in$ $\mathcal{G}(\T_{N}(\C))$.
\end{theorem}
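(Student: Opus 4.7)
The plan is to unpack the definition of $\mathcal{G}(\C)$ at $G$, push the resulting totally $\C$-acyclic complex through $\T_N$, and then verify the two Hom-exactness conditions on the image by reusing the summand-of-the-identity trick that drives the proofs of Lemmas 3.1 and 3.2.

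First, since $G \in \mathcal{G}(\C)$, there exists an exact sequence
$$\mathbf{C}^{\bullet}: \cdots \to C_{1} \to C_{0} \xrightarrow{d^{0}} C^{0} \to C^{1} \to \cdots$$
in $\C$ that is both $\Hom_{\Lambda}(\C,-)$-exact and $\Hom_{\Lambda}(-,\C)$-exact, with $G \cong \Im d^{0}$. Applying $\T_{N}$ term-by-term produces a complex $\T_{N}(\mathbf{C}^{\bullet})$ in $\T_{N}(\C)$. By Lemma 2.3(2), $\T_{N}$ is exact, so $\T_{N}(\mathbf{C}^{\bullet})$ is an exact sequence and $\T_{N}(G) \cong \Im \T_{N}(d^{0})$. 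It therefore only remains to show that $\T_{N}(\mathbf{C}^{\bullet})$ is both $\Hom_{\Gamma}(\T_{N}(\C),-)$-exact and $\Hom_{\Gamma}(-,\T_{N}(\C))$-exact.

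For the $\Hom_{\Gamma}(-,\T_{N}(C))$-exactness with $C\in\C$, the argument is exactly the one already carried out in Lemma 3.1 and Lemma 3.2: break $\T_{N}(\mathbf{C}^{\bullet})$ into short exact pieces between successive kernels $\T_{N}(K^{i})$, take any $g \in \Hom_{\Gamma}(\T_{N}(K^{i+1}), \T_{N}(C))$, apply $\T_{M}$ and read off the top-left entry
$$\T_{M}(g) = \begin{pmatrix} \alpha_{1} & \alpha_{2} \\ \alpha_{3} & \alpha_{4}\end{pmatrix}, \qquad \alpha_{1} \in \Hom_{\Lambda}(K^{i+1}, C),$$
under $\T_{M}\T_{N} \cong \mathrm{Id}_{\Mod\Lambda} \oplus \T_{X}$. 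The $\Hom_{\Lambda}(-,C)$-exactness of $\mathbf{C}^{\bullet}$ lifts $\alpha_{1}$ through the appropriate connecting map on the $\Lambda$-side, and reapplying $\T_{N}$ together with the identity-block of $\T_{N}\T_{M}(g)$ produces the desired lift of $g$ on the $\Gamma$-side.

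For the $\Hom_{\Gamma}(\T_{N}(C),-)$-exactness, I would run the exact dual of that argument: start with a cocycle $g\colon \T_{N}(C)\to \T_{N}(K^{i})$ that vanishes on the relevant kernel, apply $\T_{M}$, extract the $(1,1)$-entry $\alpha_{1}\colon C \to K^{i}$, use the $\Hom_{\Lambda}(\C,-)$-exactness of $\mathbf{C}^{\bullet}$ to factor $\alpha_{1}$ through the preceding module, and push back through $\T_{N}$ to obtain a factorization of $g$. The only place where I expect to spend care is the bookkeeping at this dual step, since it is not quite identical to either Lemma 3.1 or Lemma 3.2: Lemma 3.1 handles Ext-vanishing and Lemma 3.2 handles $\Hom_{\Lambda}(-,\C)$-exactness of coresolutions, while here we need $\Hom_{\Gamma}(\T_{N}(\C),-)$-exactness of a two-sided complex. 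Once this symmetric version of the summand trick is written out, the two exactness conditions hold simultaneously and the middle image $\T_{N}(G) \cong \Im \T_{N}(d^{0})$ lies in $\mathcal{G}(\T_{N}(\C))$, completing the proof.
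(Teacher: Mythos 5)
Your proposal is correct and follows essentially the same route as the paper: apply $\T_{N}$ to the complete $\C$-resolution, use exactness of $\T_{N}$ from Lemma 2.3(2), invoke the Lemma 3.2-style argument for $\Hom_{\Gamma}(-,\T_{N}(\C))$-exactness, and run the dual matrix/summand-of-the-identity trick (extracting the $(1,1)$-entry of $\T_{M}(g)$ and lifting via $\Hom_{\Lambda}(\C,-)$-exactness) for $\Hom_{\Gamma}(\T_{N}(\C),-)$-exactness. The paper merely organizes the complex as two half-sequences rather than one two-sided complex, which is an inessential difference.
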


\begin{proof}
Since $G\in$ $\mathcal{G}(\C)$, 
there exist exact sequences: $$0\to G\stackrel{f^{0}}\to C^{0}\stackrel{f^{1}}\to C^{1}\to \cdots \qquad(3.7)$$
and $$\cdots \to C_1 \stackrel{f_{1}}\to C_0\stackrel{f_{0}} \to G\to 0\qquad(3.8)$$ with all $C^{i}, C_{j}\in\C$, 
which are both $\Hom_{\Lambda}(\C, -)$-exact and $\Hom_{\Lambda}(-, \C)$-exact.
It follows from Lemma 3.2 that the exact sequence below
$$0\to \T_{N}(G)\stackrel{\T_{N}(f^{0})}\to \T_{N}(C^{0})\stackrel{\T_{N}(f^{1})}\to \T_{N}(C^{1})\to \cdots \qquad(3.9)$$  is $\Hom_{\Gamma}(-, \T_{N}(\C))$-exact.
We claim that the sequence (3.9) is also $\Hom_{\Gamma}(\T_{N}(\C), -)$-exact.
In fact, for each $i\geq 1$, let $K^{i} = \Im f^{i}$, then $ \Im \T_{N}(f^{i})\cong$ $\T_{N}(K^{i})$.
Consider the exact sequence
$$0\to \T_{N}(G)\stackrel{\T_{N}(f^{0})}\rightarrow \T_{N}(C{^{0}})\stackrel{\T_{N}(\pi)} \to \T_{N}(K^{1})\to 0 \qquad(3.10)$$

Given $C\in\C$ and $g\in\Hom_{\Gamma}(\T_{N}(C), \T_{N}(K^{1}))$,
one gets \begin{center}
$\T_{M}(g)\in$ $\Hom_{\Lambda}(\T_{M}\T_{N}(C), \T_{M}\T_{N}(K^{1}))$ $\cong$
$\Hom_{\Lambda}(C$ $\oplus H(C), K^{1}\oplus H(K^{1})$.
\end{center}
Write $\T_{M}(g) = \left(\begin{array}{cc}
                \gamma_{1} & \gamma_{2} \\
                 \gamma_{3} & \gamma_{4} \\
               \end{array}
             \right)$, then $\gamma_{1}\in$ $\Hom_{\Lambda}(C, K^{1})$.
Because the sequence (3.7) is $\Hom_{\Lambda}(C$, $-)$-exact,
$\Hom_{\Lambda}(C, \pi): \Hom_{\Lambda}(C, C^{0})\rightarrow \Hom_{\Lambda}(C, K^{1})\to 0$ is surjective.
Thus there exists $h\in$ $\Hom_{\Lambda}(C, C^{0})$ such that $ \gamma_{1} = \pi h$.

On the other hand, we have
 $$\T_{N}\T_{M}(g)=
\left(\begin{array}{cc}
                g & 0 \\
                 0 & \T_{Y}(g) \\
               \end{array}
             \right)$$
            
Therefore $g = \T_{N}( \gamma_{1}) = \T_{N}(\pi)\T_{N}(h)$,  
which implies that the short exact sequence (3.10) is $\Hom_{\Gamma}(\T_{N}(C)$, $-)$-exact. Repeating this process, we obtain the claim.

Similarly, we have the exact sequence $$\cdots \to \T_{N}(C_1) \stackrel{\T_{N}(f_{1})}\to \T_{N}(C_0)\stackrel{\T_{N}(f_{0})} \to \T_{N}(G)\to 0$$
which is both $\Hom_{\Gamma}(\T_{N}(\C), -)$-exact and $\Hom_{\Gamma}(-, \T_{N}(\C))$-exact.
Therefore, $\T_{N}(G) \in$ $\mathcal{G}(\T_{N}(\C))$.
\end{proof}

\begin{lemma}
Let $\Lambda$ and $\Gamma$ be separably equivalent defined by $_{\Lambda}M_\Gamma$ and $_\Gamma N_\Lambda$.
Given a subcategory $\D$ of Mod $\Gamma$ and  $L\in$ Mod $\Gamma$,
assume $\T_{N}(\C)\subseteq\D$, $\T_{M}(\D)\subseteq\C$.

(1) If $L\in$ $^{\bot}(\T_{N}(\C))$, %$($resp. $(\T_{N}(\C))^{\bot})$, 
then $L\in$ $^{\bot}\D$.
%$($resp. $\D^{\bot})$.

(2) If $\C$ is self-orthogonal and  $L \in$ $\mathrm{cores} \widetilde{\T_{N}(\C)}$, 
%$($resp. $\mathrm{res} \widetilde{\T_{N}(\C)})$, 
then $L\in$ $\mathrm{cores} \widetilde{\D}$. 
%$($resp. $\mathrm{res} \widetilde{\D})$.
\end{lemma}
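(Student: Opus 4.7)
The plan is to reduce both parts to a single observation: the hypothesis $\T_M(\D) \subseteq \C$ combined with the natural isomorphism $\T_N\T_M \cong \mathrm{Id}_{\Mod\Gamma}\oplus \T_Y$ from Lemma 2.3(3) realizes every $D \in \D$ as a direct summand of an object of $\T_N(\C)$. Indeed, since $\T_M(D) \in \C$, the module $\T_N(\T_M(D))$ lies in $\T_N(\C)$, and Lemma 2.3(3) gives $\T_N(\T_M(D)) \cong D \oplus \T_Y(D)$.

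For (1), fix $D \in \D$. By the observation, $D \oplus \T_Y(D) \in \T_N(\C)$, so $\Ext^{\geq 1}_{\Gamma}(D \oplus \T_Y(D), L) = 0$ because $L \in {}^{\bot}\T_N(\C)$. Additivity of $\Ext$ in the first argument then yields $\Ext^{\geq 1}_{\Gamma}(D, L) = 0$, whence $L \in {}^{\bot}\D$.

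For (2), pick a $\Hom_{\Gamma}(-, \T_N(\C))$-exact exact sequence $0 \to L \to T^0 \to T^1 \to \cdots$ with every $T^i \in \T_N(\C)$. Since $\T_N(\C) \subseteq \D$, each $T^i$ already lies in $\D$, so it suffices to verify $\Hom_{\Gamma}(-, D)$-exactness for every $D \in \D$. The observation places $D \oplus \T_Y(D)$ in $\T_N(\C)$, so the sequence is $\Hom_{\Gamma}(-, D \oplus \T_Y(D))$-exact. As $\Hom_{\Gamma}(-, D \oplus \T_Y(D)) \cong \Hom_{\Gamma}(-, D) \oplus \Hom_{\Gamma}(-, \T_Y(D))$ as complexes, exactness of this direct sum splits into exactness of each summand, delivering the required $\Hom_{\Gamma}(-, D)$-exactness. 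Hence $L \in \mathrm{cores}\widetilde{\D}$.

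The one step requiring attention is the summand realization $\T_N\T_M(D) \cong D \oplus \T_Y(D) \in \T_N(\C)$; once this is in hand, both assertions collapse to additivity of $\Hom$ or $\Ext$. The self-orthogonality hypothesis in (2) does not enter this streamlined argument explicitly, but via Lemma 3.1 it makes $\T_N(\C)$ self-orthogonal, which is exactly the assumption that would be needed for an alternative proof proceeding by dimension-shifting Ext groups along the coresolution, in the style of Lemmas 3.1 and 3.2.
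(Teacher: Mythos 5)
Your part (1) is essentially the paper's own argument: realize $D$ as a direct summand of $\T_{N}\T_{M}(D)\in\T_{N}(\C)$ and conclude by additivity of $\Ext$. One slip to fix: with the paper's convention $^{\bot}\mathscr{X}=\{A \mid \Ext^{\geq 1}(A,X)=0 \mbox{ for all } X\in\mathscr{X}\}$, the hypothesis $L\in{}^{\bot}(\T_{N}(\C))$ gives $\Ext^{\geq 1}_{\Gamma}(L,\,D\oplus\T_{Y}(D))=0$, not $\Ext^{\geq 1}_{\Gamma}(D\oplus\T_{Y}(D),\,L)=0$ as you wrote, so you want additivity in the \emph{second} argument; the argument is symmetric and nothing breaks, but the variance should be corrected. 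Part (2) is where you genuinely diverge, and your route is the more economical one. The paper keeps the given coresolution and proves $\Hom_{\Gamma}(-,\D)$-exactness by showing each cosyzygy $L^{i}$ lies in $^{\bot}(\T_{N}(\C))$ --- this is exactly where self-orthogonality of $\C$ enters, via Lemma 3.1 and \cite[Lemma 4.1.1]{C} --- and then applying part (1) to get $L^{i}\in{}^{\bot}\D$. You instead note that the coresolution is already $\Hom_{\Gamma}(-,\,D\oplus\T_{Y}(D))$-exact because $D\oplus\T_{Y}(D)\cong\T_{N}\T_{M}(D)\in\T_{N}(\C)$, and that a direct sum of complexes is exact if and only if each summand is, which delivers $\Hom_{\Gamma}(-,D)$-exactness directly. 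Your closing observation is correct: the self-orthogonality hypothesis is not needed for your argument, so your proof of (2) establishes a slightly stronger statement. Both proofs are valid; yours trades the appeal to Christensen's dimension-shifting lemma for a weaker hypothesis, while the paper's version matches the template it reuses in Lemma 3.8(2).
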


\begin{proof}
(1) For any $D\in\D$, by assumption, $\T_{M}(D)\in\C$, and so $\T_{N}\T_{M}(D)$ is in $\T_{N}(\C)$.
Because $L\in$ $^{\bot}(\T_{N}(\C))$,
$\Ext^{i}_{\Gamma}(L, \T_{N}\T_{M}(D)) = 0$,
which implies $\Ext^{i}_{\Gamma}(L, D) = 0$ since $D$ is a direct summand of $\T_{N}\T_{M}(D)$. Thus $L\in$ $^{\bot}\D$.

(2) Let $L\in$ $\mathrm{cores} \widetilde{\T_{N}(\C)}$.
Then there is a $\Hom_{\Gamma}(-, \T_{N}(\C))$-exact exact sequence 
$$0\to L\to D^{0}\to D^{1}\to\cdots \qquad (3.7)$$
with each $D^{i}\in$ $\T_{N}(\C)\subseteq\D$.
Since $\C$ is self-orthogonal, so is $\T_{N}(\C)$ by Lemma 3.1.
It follows from \cite[Lemma 4.1.1]{C} that every $L^{i} = \Im(D^{i-1}\to D^{i})$ is in $^{\bot}(\T_{N}(\C))$ for $i\geq 1$.
By (1) one gets $L^{i}\in$ $^{\bot}\D$,
which shows that the sequence (3.7) is $\Hom_{\Gamma}(-, \D)$-exact.
Hence $L\in$ $\mathrm{cores} \widetilde{\D}$.
\end{proof}

The following result means that Gorenstein projective and Gorenstein injective modules are invariant under separable equivalences, which improves Proposition 4.5 in \cite{LX}.

\begin{corollary}
Let $\Lambda$ and $\Gamma$ be separably equivalent defined 
by $_{\Lambda}M_\Gamma$ and $_\Gamma N_\Lambda$, then 

$(1)$ $G\in$ $\mathcal{GP}(\Lambda)$ if and only if $\T_{N}(G) \in$ $\mathcal{GP}(\Gamma)$.

$(2)$ $G\in$ $\mathcal{GI}(\Lambda)$ if and only if $\T_{N}(G) \in$ $\mathcal{GI}(\Gamma)$.
\end{corollary}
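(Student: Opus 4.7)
The plan is to reduce both parts of the corollary to Theorem 3.3 via the identifications $\mathcal{GP}(\Lambda)=r\mathcal{G}(\mathcal{P}(\Lambda))$ and $\mathcal{GI}(\Lambda)=l\mathcal{G}(\mathcal{I}(\Lambda))$ supplied by Remark 2.7(1) (and the analogous ones for $\Gamma$), and then to translate from the transported reference subcategory $\T_N(\mathcal{P}(\Lambda))$ (respectively $\T_N(\mathcal{I}(\Lambda))$) back to the standard one $\mathcal{P}(\Gamma)$ (respectively $\mathcal{I}(\Gamma)$) by means of Lemma 3.5 and its injective-side dual.

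For the ``only if'' direction of (1), I assume $G\in\mathcal{GP}(\Lambda)=r\mathcal{G}(\mathcal{P}(\Lambda))$. Theorem 3.3(1) yields $\T_N(G)\in r\mathcal{G}(\T_N(\mathcal{P}(\Lambda)))$. To invoke Lemma 3.5 with $\C=\mathcal{P}(\Lambda)$ and $\D=\mathcal{P}(\Gamma)$ I verify its three hypotheses: the inclusions $\T_N(\mathcal{P}(\Lambda))\subseteq\mathcal{P}(\Gamma)$ and $\T_M(\mathcal{P}(\Gamma))\subseteq\mathcal{P}(\Lambda)$ are immediate from Lemma 2.3(2), and the self-orthogonality of $\mathcal{P}(\Lambda)$ is trivial. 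Lemma 3.5(1) then puts $\T_N(G)$ in ${^\bot}\mathcal{P}(\Gamma)$ and Lemma 3.5(2) puts it in $\mathrm{cores}\widetilde{\mathcal{P}(\Gamma)}$, so $\T_N(G)\in r\mathcal{G}(\mathcal{P}(\Gamma))=\mathcal{GP}(\Gamma)$.

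For the ``if'' direction I exploit the symmetry of the separable equivalence: applying the forward direction just proved to the swapped data $(\Gamma,\Lambda,N,M)$ converts the hypothesis $\T_N(G)\in\mathcal{GP}(\Gamma)$ into $\T_M\T_N(G)\in\mathcal{GP}(\Lambda)$. By Lemma 2.3(3) we have $\T_M\T_N(G)\cong G\oplus\T_X(G)$, so $G$ is a direct summand of a Gorenstein projective module; the well-known closure of $\mathcal{GP}(\Lambda)$ under direct summands then forces $G\in\mathcal{GP}(\Lambda)$.

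Part (2) is entirely parallel: I apply Theorem 3.3(2) with $\C=\mathcal{I}(\Lambda)$ and then invoke the dual of Lemma 3.5 (interchanging ${^\bot}(-)$ with $(-)^\bot$ and $\mathrm{cores}$ with $\mathrm{res}$) with $\D=\mathcal{I}(\Gamma)$. The necessary inclusions $\T_N(\mathcal{I}(\Lambda))\subseteq\mathcal{I}(\Gamma)$ and $\T_M(\mathcal{I}(\Gamma))\subseteq\mathcal{I}(\Lambda)$ are the preservation-of-injectives fact already noted in the proof of Lemma 3.1(2). The converse uses symmetry again, together with the closure of $\mathcal{GI}(\Lambda)$ under direct summands. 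I expect the only real obstacle to be writing out the dual of Lemma 3.5 and confirming its hypotheses, since its proof is a straightforward transcription of the one given for Lemma 3.5 but with injectives in place of projectives; everything else is bookkeeping on top of results already established in the paper.
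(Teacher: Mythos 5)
Your proposal is correct and follows essentially the same route as the paper: identify $\mathcal{GP}(\Lambda)$ with $r\mathcal{G}(\mathcal{P}(\Lambda))$, apply Theorem 3.3 to land in $r\mathcal{G}(\T_N(\mathcal{P}(\Lambda)))$, pass to $r\mathcal{G}(\mathcal{P}(\Gamma))$ via Lemma 3.5 using the inclusions from Lemma 2.3(2), and handle the converse by symmetry together with the fact that $G$ is a direct summand of $\T_M\T_N(G)$ and $\mathcal{GP}(\Lambda)$ is closed under direct summands (the paper cites Holm's Theorem 2.5 for this last point). Part (2) is likewise treated by the dual argument in both your write-up and the paper.
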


\begin{proof} 
We only prove (1), and (2) can be proved dually.

Assume $G\in$ $\mathcal{GP}(\Lambda)=$ $r\G(\mathcal{P}(\Lambda))$.
Thus $\T_{N}(G) \in$ $r\mathcal{G}(\T_{N}(\mathcal{P}(\Lambda)))$ by Theorem 3.3.
Since $\T_{N}(\mathcal{P}(\Lambda)) \subseteq$ $\mathcal{P}(\Gamma)$ and 
$\T_{M}(\mathcal{P}(\Gamma)) \subseteq$ $\mathcal{P}(\Lambda)$ by Lemma 2.3(2),
then $r\mathcal{G}(\T_{N}(\mathcal{P}(\Lambda)))$ $\subseteq$ $r\mathcal{G}(\mathcal{P}(\Gamma))$ 
from Lemma 3.5, which yields $\T_{N}(G) \in$ $\mathcal{GP}(\Gamma)$.

Conversely, assume $\T_{N}(G) \in$ $\mathcal{GP}(\Gamma)$.
By a similar argument of the \lq\lq only if\rq\rq part, we have 
$\T_{M}\T_{N}(G) \in$ $\mathcal{GP}(\Lambda)$,
and so $G \in$ $\mathcal{GP}(\Lambda)$ from \cite[Theorem 2.5]{Ho}.
\end{proof} 

\begin{corollary}
Let $\Lambda$ and $\Gamma$ be separably equivalent defined 
by $_{\Lambda}M_\Gamma$ and $_\Gamma N_\Lambda$, then 

$(1)$ $A\in$ $^{\bot}\mathcal{P}(\Lambda)$ if and only if $\T_{N}(A) \in$ $^{\bot}\mathcal{P}(\Gamma)$.

$(2)$ $A\in$ $\mathcal{T}^{\infty}(\Lambda)$ if and only if $\T_{N}(A) \in$ $\mathcal{T}^{\infty}(\Gamma)$.
\end{corollary}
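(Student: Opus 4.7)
The strategy is to specialize the general machinery built in Lemmas 3.1, 3.2 and 3.5 to the concrete subcategories $\mathcal{P}(\Lambda)$ and $\mathcal{P}(\Gamma)$. The key setup is provided by Lemma 2.3(2): both $\T_N$ and $\T_M$ send projectives to projectives, so we automatically obtain the two inclusions $\T_N(\mathcal{P}(\Lambda)) \subseteq \mathcal{P}(\Gamma)$ and $\T_M(\mathcal{P}(\Gamma)) \subseteq \mathcal{P}(\Lambda)$ required to invoke Lemma 3.5. Moreover, $\mathcal{P}(\Lambda)$ is trivially self-orthogonal, so the self-orthogonality hypothesis needed in Lemma 3.5(2) is also satisfied.

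For (1), the forward direction runs as follows: if $A \in {^\bot}\mathcal{P}(\Lambda)$, then Lemma 3.1(1) applied with $\C = \mathcal{P}(\Lambda)$ gives $\T_N(A) \in {^\bot}\T_N(\mathcal{P}(\Lambda))$, and Lemma 3.5(1) (with $\C = \mathcal{P}(\Lambda)$ and $\D = \mathcal{P}(\Gamma)$) then strengthens this to $\T_N(A) \in {^\bot}\mathcal{P}(\Gamma)$. For the converse, the inclusion $\T_N(\mathcal{P}(\Lambda)) \subseteq \mathcal{P}(\Gamma)$ yields ${^\bot}\mathcal{P}(\Gamma) \subseteq {^\bot}\T_N(\mathcal{P}(\Lambda))$ at no cost, and Lemma 3.1(1) transports the property back to $A$.

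For (2), reading $\mathcal{T}^\infty(\Lambda)$ as the class of $\Lambda$-modules in ${^\bot}\mathcal{P}(\Lambda)$ that also admit a $\Hom_\Lambda(-,\mathcal{P}(\Lambda))$-exact coresolution by projectives (so that its definition combines the two ingredients handled separately in Lemmas 3.1 and 3.2), the argument is parallel. One uses part (1) for the ${^\bot}$-part; Lemma 3.2(1) to transport the coresolution through $\T_N$ into $\mathrm{cores}\widetilde{\T_N(\mathcal{P}(\Lambda))}$; and finally Lemma 3.5(2) to replace $\T_N(\mathcal{P}(\Lambda))$ by $\mathcal{P}(\Gamma)$ inside the coresolution. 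The reverse implication again applies $\T_M$ and exploits the summand decomposition $\T_M\T_N(A) \cong A \oplus \T_X(A)$ from Lemma 2.3(3).

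The main obstacle, in both parts, is the reverse implication. The template, already visible in the proof of Corollary 3.6, is to apply $\T_M$ to $\T_N(A)$, identify $\T_M\T_N(A)$ with $A \oplus \T_X(A)$, and appeal to closure of the target class under direct summands. For part (2) this closure is subtler because it must propagate through an infinite coresolution; this is precisely what the self-orthogonality argument inside Lemma 3.5(2) (via \cite[Lemma 4.1.1]{C}) accomplishes, which is why it is important to observe at the outset that $\mathcal{P}(\Lambda)$ and $\mathcal{P}(\Gamma)$ are self-orthogonal.
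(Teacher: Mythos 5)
Your proposal matches the paper's proof in all essentials: both parts specialize Lemmas 3.1, 3.2 and 3.5 to $\C = \mathcal{P}(\Lambda)$ (using $\T_N(\mathcal{P}(\Lambda))\subseteq\mathcal{P}(\Gamma)$, $\T_M(\mathcal{P}(\Gamma))\subseteq\mathcal{P}(\Lambda)$ and self-orthogonality of projectives), and handle the converses via the summand decomposition $\T_M\T_N(A)\cong A\oplus\T_X(A)$ together with closure of the target classes under direct summands. The only discrepancy is definitional: the paper reads $\mathcal{T}^{\infty}(\Lambda)$ as just $\mathrm{cores}\,\widetilde{\mathcal{P}(\Lambda)}$ with no orthogonality condition, so the extra ${^\bot}\mathcal{P}(\Lambda)$ ingredient you build into part (2) is harmless but not needed.
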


\begin{proof} 
(1) Let $\C = \mathcal{P}(\Lambda)$ in Lemma 3.1 (1), 
$A\in$ $^{\bot}\mathcal{P}(\Lambda)$ implies  $\T_{N}(A) \in$ $^{\bot}\T_{N}(\mathcal{P}(\Lambda))$.
Since $\T_{M}(\mathcal{P}(\Gamma))\subseteq$ $\mathcal{P}(\Lambda)$,
then $\T_{N}(A) \in$ $^{\bot}\mathcal{P}(\Gamma)$ from Lemma 3.5 (1).
Conversely, if $\T_{N}(A) \in$ $^{\bot}\mathcal{P}(\Gamma)$, then 
$\T_{M}\T_{N}(A) \in$ $^{\bot}\mathcal{P}(\Lambda)$, and so is $A$ since $A$ is a direct summand of $\T_{M}\T_{N}(A)$.

(2) Let $A\in$ $\mathcal{T}^{\infty}(\Lambda)$, that is $A\in$ $\mathrm{cores} \widetilde{\mathcal{P}(\Lambda)}$.
Thus $\T_{N}(A) \in$ $\mathrm{cores} \widetilde{\T_{N}(\mathcal{P}(\Lambda))}$.
Lemma 3.5 (2) yields that $\T_{N}(A) \in$ $\mathrm{cores} \widetilde{\mathcal{P}(\Gamma)}$ = $\mathcal{T}^{\infty}(\Gamma)$. The converse follows from the fact that $\mathrm{cores} \widetilde{\C}$ are closed under direct summands.

\end{proof}

\begin{lemma}
Let $\Lambda$ and $\Gamma$ be separably equivalent defined by $_{\Lambda}M_\Gamma$ and $_\Gamma N_\Lambda$. For any $L\in$ Mod $\Gamma$, one has

(1) If $L\in$ $^{\bot}(\T_{N}(\Add C))$, then $L\in$ $^{\bot}(\Add \T_{N}(C))$.

(2) If $L\in$ $\mathrm{cores} \widetilde{\T_{N}(\Add C)}$, then $L\in$ $\mathrm{cores}( \widetilde{\Add \T_{N}(C)})$.

In particular, the results above also hold for the case of $\add$.
\end{lemma}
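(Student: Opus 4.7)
My plan is to exploit a single observation: because $\T_N = N\otimes_\Lambda -$ commutes with arbitrary direct sums, the module $\T_N(C)^{(I)} = \T_N(C^{(I)})$ lies simultaneously in $\T_N(\Add C)$ (since $C^{(I)}\in\Add C$) and in $\Add \T_N(C)$ for every index set $I$. Both subcategories are defined by closure under direct summands of suitable coproducts, and the family $\{\T_N(C^{(I)})\}_I$ sits in both. As a byproduct one obtains the inclusion $\T_N(\Add C)\subseteq \Add \T_N(C)$: if $X$ is a summand of $C^{(I)}$ then $\T_N(X)$ is a summand of $\T_N(C^{(I)})=\T_N(C)^{(I)}$.

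For part (1), I will take any $Y\in\Add \T_N(C)$ and choose $Y'$ and $I$ with $Y\oplus Y'\cong \T_N(C)^{(I)}=\T_N(C^{(I)})$. Since $\T_N(C^{(I)})\in \T_N(\Add C)$, the hypothesis $L\in{}^{\bot}(\T_N(\Add C))$ gives $\Ext^{\geq 1}_\Gamma(\T_N(C^{(I)}),L)=0$, and Ext-vanishing in the first variable is inherited by direct summands, whence $\Ext^{\geq 1}_\Gamma(Y,L)=0$. This yields $L\in{}^{\bot}(\Add \T_N(C))$.

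For part (2), I start with a $\Hom_\Gamma(-,\T_N(\Add C))$-exact coresolution $0\to L\to D^0\to D^1\to\cdots$ with each $D^i\in\T_N(\Add C)$. The preliminary inclusion $\T_N(\Add C)\subseteq \Add \T_N(C)$ already places the $D^i$ in $\Add \T_N(C)$, so only the exactness condition needs upgrading. Given a test object $Y\in\Add \T_N(C)$ with a complement $Y\oplus Y'\cong \T_N(C)^{(I)}\in \T_N(\Add C)$, applying $\Hom_\Gamma(-,\T_N(C)^{(I)})$ to the coresolution produces an exact complex of abelian groups by hypothesis; this complex decomposes as $\Hom_\Gamma(-,Y)\oplus\Hom_\Gamma(-,Y')$ applied to the coresolution, and since cohomology commutes with direct sums, each summand is exact. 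In particular $\Hom_\Gamma(-,Y)$ preserves exactness, giving $L\in \mathrm{cores}\widetilde{\Add \T_N(C)}$.

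The $\add$ version is the same argument restricted to finite index sets, using that $\T_N$ certainly respects finite coproducts. I do not foresee a real obstacle; the entire proof hinges on the commutation $\T_N(C^{(I)})=\T_N(C)^{(I)}$ and the two standard summand-stability facts (for Ext-vanishing and for exactness of $\Hom$-complexes) doing the bookkeeping.
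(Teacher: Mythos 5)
Your proof is correct, and for part (2) it takes a genuinely different --- and arguably cleaner --- route than the paper. Part (1) coincides with the paper's argument: both reduce a test object $Y\in\Add \T_N(C)$ to a direct summand of $\T_N(C)^{(I)}=\T_N(C^{(I)})\in\T_N(\Add C)$ and conclude by additivity of $\Ext$ in the relevant variable (one small notational point: the paper's convention is ${}^{\bot}\mathcal{D}=\{L \mid \Ext^{\geq 1}(L,D)=0 \text{ for all } D\in\mathcal{D}\}$, so the $\Ext$ groups should read $\Ext^{\geq 1}_\Gamma(L,\T_N(C^{(I)}))$ and $\Ext^{\geq 1}_\Gamma(L,Y)$ rather than with the variables swapped; the argument is symmetric, so nothing breaks). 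For part (2) the paper does not argue via summand decomposition of the $\Hom$-complex: it first invokes self-orthogonality of $\Add C$ (citing White) together with Lemma 3.1 to make $\T_N(\Add C)$ self-orthogonal, then uses a dimension-shifting lemma of Christensen to place every cosyzygy $L^{i}=\Im(D^{i-1}\to D^{i})$ in ${}^{\bot}(\T_N(\Add C))$, and finally applies part (1) to each $L^{i}$ to get the required $\Hom_\Gamma(-,\Add\T_N(C))$-exactness. Your argument instead applies $\Hom_\Gamma(-,\T_N(C)^{(I)})$ to the coresolution, splits the resulting exact complex as the direct sum of the complexes obtained from $\Hom_\Gamma(-,Y)$ and $\Hom_\Gamma(-,Y')$, and reads off exactness of each summand. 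This is more elementary, avoids the two external citations, and --- notably --- does not need $C$ to be self-orthogonal, a hypothesis that the paper's proof of (2) uses silently even though it does not appear in the statement of the lemma (it is harmless in the paper's applications, where $C$ is Wakamatsu tilting, but your version proves the lemma as literally stated).
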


\begin{proof}
(1) Assume $L\in$ $^{\bot}(\T_{N}(\Add C))$.
Because $\Ext^{i}_{\Gamma}(L, \oplus\T_{N}(C)) \cong$ $\Ext^{i}_{\Gamma}(L, \T_{N}(\oplus C)) =0$,
one gets $L\in$ $^{\bot}(\Add \T_{N}(C))$.

(2) Let $L\in$ $\mathrm{cores} \widetilde{\T_{N}(\Add C)}$.
Then there is a $\Hom_{\Gamma}(-, \T_{N}(\Add C))$-exact exact sequence 
$$0\to L\to D^{0}\to D^{1}\to\cdots \qquad (3.7)$$
with each $D^{i}\in$ $\T_{N}(\Add C)$. Clearly, $\T_{N}(\Add C)\subseteq \Add \T_{N}(C)$.
Since $\Add C$ is self-orthogonal from \cite[Proposition 2.6 and 2.7]{Wh}, so is $\T_{N}(\Add C)$ by Lemma 3.1.
It follows from \cite[Lemma 4.1.1]{C} that every $L^{i} = \Im(D^{i-1}\to D^{i})$ is in $^{\bot}(\T_{N}(\Add C))$ for $i\geq 1$.
By (1) one gets $L^{i}\in$ $^{\bot}\Add \T_{N}(C)$,
which shows that the sequence (3.7) is $\Hom_{\Gamma}(-, \Add \T_{N}(C))$-exact,
and hence we obtain the assertion.
\end{proof}

\begin{proposition} 
Let $\Lambda$ and $\Gamma$ be separably equivalent linked by $_{\Lambda}M_\Gamma$ and $_\Gamma N_\Lambda$.
If $C$ is a Wakamatsu tilting $\Lambda$-module, then $N\otimes _{\Lambda} C$ is a Wakamatsu tilting $\Gamma$-module.
\end{proposition}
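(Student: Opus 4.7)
The plan is to verify the three defining conditions of Wakamatsu tilting for $\T_N(C) = N \otimes_{\Lambda} C$ as a $\Gamma$-module: that $\T_N(C)$ is self-orthogonal, that $\T_N(C) \in \mathrm{res}\,\widetilde{\add \Gamma}$, and that $\Gamma \in \mathrm{cores}\,\widetilde{\add \T_N(C)}$.

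Self-orthogonality is immediate from Lemma 3.1: since $C$ is self-orthogonal, so is $\add C$; then $\T_N(\add C)$ is self-orthogonal by Lemma 3.1, and in particular $\T_N(C)$ is self-orthogonal over $\Gamma$. For the resolution condition, I start with the proper $\add \Lambda$-resolution of $C$ furnished by the Wakamatsu tilting hypothesis and push it through the exact functor $\T_N$ (Lemma 2.3(2)); the resulting exact sequence has terms in $\T_N(\add \Lambda) \subseteq \add N \subseteq \add \Gamma$, where the second inclusion uses that ${}_\Gamma N$ is finitely generated projective by the setup of separable equivalence, and the required $\Hom_{\Gamma}(\add \Gamma, -)$-exactness is automatic because $\add \Gamma$ consists of projectives.

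The main work is establishing $\Gamma \in \mathrm{cores}\,\widetilde{\add \T_N(C)}$. Beginning from the given coresolution of $\Lambda$ in $\add C$, Lemma 3.2(1) gives $\T_N(\Lambda) = N \in \mathrm{cores}\,\widetilde{\T_N(\add C)}$; since $\add C$ is self-orthogonal, Lemma 3.8(2) then promotes this to $N \in \mathrm{cores}\,\widetilde{\add \T_N(C)}$. Because ${}_\Gamma N$ is a projective generator (Lemma 2.3(1)), $\Gamma$ is a direct summand of $N^{k}$ for some $k \geq 1$; taking $k$ copies of the previous coresolution yields $N^{k} \in \mathrm{cores}\,\widetilde{\add \T_N(C)}$, and the closure of $\mathrm{cores}\,\widetilde{\C}$ under direct summands---already invoked in the proof of Corollary 3.7(2)---then delivers $\Gamma \in \mathrm{cores}\,\widetilde{\add \T_N(C)}$.

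The principal obstacle is this last passage: extracting a coresolution of the distinguished summand $\Gamma$ from the coresolution we have for the bimodule $N$. It rests on combining the projective-generator property of ${}_\Gamma N$ with the summand-closure of the $\mathrm{cores}\,\widetilde{\C}$ classes; the remaining conditions are essentially a packaging of Lemmas 3.1, 3.2, and 3.8 together with the basic projective/generator properties of $N$.
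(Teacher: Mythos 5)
Your proposal is correct and follows essentially the same route as the paper: self-orthogonality via Lemma 3.1, the resolution condition by applying $\T_N$ to an $\add\Lambda$-resolution of $C$, and the coresolution condition via Lemma 3.2 and the $\add$ case of Lemma 3.8(2) to get $N\in \mathrm{cores}\,\widetilde{(\add \T_N(C))}$, then passing from $N$ to $\Gamma$ using $\Gamma\in\add {}_\Gamma N$. The only cosmetic difference is in that last passage, where the paper invokes a dual of a cited lemma of Huang--Sun while you use summand-closure of $\mathrm{cores}\,\widetilde{\C}$ (a fact the paper itself uses in Corollary 3.7); these amount to the same argument.
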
 

\begin{proof}
Assume that $C$ is a Wakamatsu tilting $\Lambda$-module.
First of all, since $C$ is self-orthogonal, so is $\T_{N}(C)$ by Lemma 3.1.
Because $C\in$ $\mathrm{res} \widetilde{(\add \Lambda)}$, there exists an exact sequence 
$\cdots\to P_{1}\to P_0\to C\to 0$ with $P_{i}\in \add \Lambda$. 
Applying $\T_N$ to this sequence gives rise to an exact sequence
$$\cdots\to \T_N(P_{1})\to \T_N(P_0)\to \T_N(C)\to 0$$
Obviously, $\T_{N}(P_{i})\in \add \Gamma$, which means 
$\T_{N}(C)\in$ $\mathrm{res} \widetilde{(\add \Gamma)}$.

Since $\Lambda\in \mathrm{cores} \widetilde{(\add C)}$, Lemma 3.2 yields
$_{\Gamma}N\cong\T_{N}(\Lambda)\in$ $\mathrm{cores} \widetilde{(\T_{N}(\add C))}$.
From the {\lq add\rq} case of Lemma 3.8(2), one has $N\in$ $\mathrm{cores} \widetilde{(\add \T_{N}(C))}$.
Since $_{\Gamma}N$ is a projective generator due to Lemma 2.3(1), then 
$\Gamma\in\add{_\Gamma}N$. By a dual result of \cite[Lemma 4.2]{HS}, 
one has $\Gamma\in$ $\mathrm{cores} \widetilde{(\add \T_{N}(C))}$,
which completes the proof.
\end{proof}

\begin{corollary}
Let $\Lambda$ and $\Gamma$ be separably equivalent defined by $_{\Lambda}M_\Gamma$ and $_\Gamma N_\Lambda$.

$(1)$ If $G\in$ $\mathcal{GP}_{C}(\Lambda)$, then $\T_{N}(G) \in$ $\mathcal{GP}_{\T_{N}(C)}(\Gamma)$.

$(2)$ If $H\in$ $\mathscr{B}_{C}(\Lambda)$, then $\T_{N}(H) \in$ $\mathscr{B}_{\T_{N}(C)}(\Gamma)$.
\end{corollary}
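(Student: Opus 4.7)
The plan is to combine Theorem 3.3, Lemma 3.8 and Proposition 3.9 in order to bridge the one-sided Gorenstein subcategories relative to $\Add C$ over $\Lambda$ with those relative to $\Add \T_{N}(C)$ over $\Gamma$. First I would invoke Proposition 3.9 to ensure that $\T_{N}(C)=N\otimes_{\Lambda}C$ is a Wakamatsu tilting $\Gamma$-module, so that both $\mathcal{GP}_{\T_{N}(C)}(\Gamma)$ and $\mathscr{B}_{\T_{N}(C)}(\Gamma)$ are actually defined. Remark 2.7(2)(i) then supplies the identifications $\mathcal{GP}_{C}(\Lambda)=r\mathcal{G}(\Add C)$, $\mathcal{GP}_{\T_{N}(C)}(\Gamma)=r\mathcal{G}(\Add \T_{N}(C))$, $\mathscr{B}_{C}(\Lambda)=l\mathcal{G}(\Add C)$, and $\mathscr{B}_{\T_{N}(C)}(\Gamma)=l\mathcal{G}(\Add \T_{N}(C))$, so that both statements reduce to transferring the one-sided Gorenstein conditions from the subcategory $\T_{N}(\Add C)$ to the subcategory $\Add \T_{N}(C)$.

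For (1), I would apply Theorem 3.3(1) to $G\in r\mathcal{G}(\Add C)$ to obtain $\T_{N}(G)\in r\mathcal{G}(\T_{N}(\Add C))$, i.e. $\T_{N}(G)$ lies in ${^\bot}(\T_{N}(\Add C))\cap \mathrm{cores}\,\widetilde{\T_{N}(\Add C)}$. Lemma 3.8(1) and (2) then upgrade these two memberships to ${^\bot}(\Add \T_{N}(C))$ and $\mathrm{cores}\,\widetilde{\Add \T_{N}(C)}$ respectively, whence $\T_{N}(G)\in r\mathcal{G}(\Add \T_{N}(C))=\mathcal{GP}_{\T_{N}(C)}(\Gamma)$. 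Lemma 3.8 is available here because the Wakamatsu hypothesis on $C$ makes $\Add C$ self-orthogonal.

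For (2) the argument is the formal dual. Theorem 3.3(2) gives $\T_{N}(H)\in l\mathcal{G}(\T_{N}(\Add C))=\T_{N}(\Add C)^{\bot}\cap \mathrm{res}\,\widetilde{\T_{N}(\Add C)}$, and I would conclude via the evident duals of Lemma 3.8(1) and (2) to obtain $\T_{N}(H)\in \Add\T_{N}(C)^{\bot}\cap \mathrm{res}\,\widetilde{\Add \T_{N}(C)}=l\mathcal{G}(\Add \T_{N}(C))=\mathscr{B}_{\T_{N}(C)}(\Gamma)$. The dual of Lemma 3.8(1) is immediate: any $A\in \Add \T_{N}(C)$ is a summand of some $\T_{N}(\bigoplus C)\in \T_{N}(\Add C)$, and $\Ext$ respects direct summands in the first variable. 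The dual of Lemma 3.8(2) runs on exactly the same script as the original, combining the self-orthogonality of $\T_{N}(\Add C)$ (from Lemma 3.1 applied to the self-orthogonal $\Add C$) with \cite[Lemma 4.1.1]{C} to place every syzygy of a $\Hom_{\Gamma}(\T_{N}(\Add C),-)$-exact resolution in $\T_{N}(\Add C)^{\bot}$, which the dual of (1) then places in $\Add\T_{N}(C)^{\bot}$. The only point requiring care is these dual versions of Lemma 3.8, but this is essentially bookkeeping rather than a new idea; the conceptual work has already been done in Theorem 3.3, Lemma 3.8 and Proposition 3.9.
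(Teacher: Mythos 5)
Your proposal is correct and follows essentially the same route as the paper: identify $\mathcal{GP}_{C}(\Lambda)$ and $\mathscr{B}_{C}(\Lambda)$ with $r\mathcal{G}(\Add C)$ and $l\mathcal{G}(\Add C)$ via Remark 2.7, apply Theorem 3.3 to land in $r\mathcal{G}(\T_{N}(\Add C))$ (resp.\ $l\mathcal{G}(\T_{N}(\Add C))$), pass to $\Add\T_{N}(C)$ via Lemma 3.8 and its dual, and use Proposition 3.9 to recognize the target as $\mathcal{GP}_{\T_{N}(C)}(\Gamma)$ (resp.\ $\mathscr{B}_{\T_{N}(C)}(\Gamma)$). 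Your write-up merely makes explicit the dual of Lemma 3.8 that the paper leaves as an exercise.
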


\begin{proof} 
(1) Let $G\in$ $\mathcal{GP}_{C}(\Lambda)$.
From Remark 2.6(2) we know that $\mathcal{GP}_{C}(\Lambda) = $ $r\mathcal{G}(\Add C)$.
Thus $\T_{N}(G)\in$ $r\mathcal{G}(\T_{N}(\Add C))$ $\subseteq$
$r\mathcal{G}(\Add \T_{N}(C))$ by Theorem 3.3(1) and Lemma 3.8. 
Note that $\T_{N}(C)$ is a Wakamatsu tilting $\Gamma$-module from Proposition 3.9,
hence $\T_{N}(G)\in r\mathcal{G}(\Add \T_{N}(C))$ $=\mathcal{GP}_{\T_{N}(C)}(\Gamma)$. 

(2) Note that $\mathscr{B}_{C}(\Lambda) = $ $l\mathcal{G}(\Add C)$.
The proof is similar to (1) by using the dual result of Lemma 3.8.
\end{proof}

\begin{lemma}
Let $\Lambda$ and $\Gamma$ be separably equivalent defined by $_{\Lambda}M_\Gamma$ and $_\Gamma N_\Lambda$. 
For any $L\in$ Mod $\Gamma$, one has

(1) If $L\in$ $^{\bot}(\T_{M}(\Prod C^{+}))$, then $L\in$ $^{\bot}(\Prod \T_{M}(C^{+}))$.

(2) If $L\in$ $\mathrm{cores} (\widetilde{\T_{M}(\Prod C^{+})})$, then $L\in$ $\mathrm{cores}( \widetilde{\Prod \T_{M}(C^{+})})$.
\end{lemma}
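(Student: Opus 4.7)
The plan is to mirror the structure of Lemma 3.8, swapping $\Add$ for $\Prod$, direct sums for direct products, and the functor $\T_N$ for $\T_M$. The new technical input compared with Lemma 3.8 is the behaviour of $\T_M = M \otimes_\Gamma -$ on arbitrary direct products: unlike tensor with direct sums, commutation with products is not automatic. It is however forced here because $M_\Gamma$ is finitely generated projective by Definition 2.2. Writing $M$ as a direct summand of some $\Gamma^n_\Gamma$ and using additivity of the tensor product, I would first record a natural isomorphism $\T_M(\prod_i Z_i) \cong \prod_i \T_M(Z_i)$ for any family $\{Z_i\}$ of left $\Gamma$-modules. This is the pivotal computation on which everything else hinges.

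For (1), I would pick an arbitrary $D \in \Prod \T_M(C^+)$, so that $D$ is a direct summand of some $\prod_I \T_M(C^+)$. By the commutation just established, $\prod_I \T_M(C^+) \cong \T_M(\prod_I C^+)$, and since $\prod_I C^+ \in \Prod C^+$ this object lies in $\T_M(\Prod C^+)$. The hypothesis $L \in {}^\bot(\T_M(\Prod C^+))$ then gives $\Ext_\Lambda^{\geq 1}(L, \T_M(\prod_I C^+)) = 0$, and Ext-vanishing passes to direct summands, so $\Ext_\Lambda^{\geq 1}(L, D) = 0$. Hence $L \in {}^\bot(\Prod \T_M(C^+))$.

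For (2), the strategy follows Lemma 3.8(2) almost verbatim. Starting from a $\Hom_\Lambda(-, \T_M(\Prod C^+))$-exact coresolution $0 \to L \to D^0 \to D^1 \to \cdots$ with each $D^i \in \T_M(\Prod C^+)$, I would first note the inclusion $\T_M(\Prod C^+) \subseteq \Prod \T_M(C^+)$ (again via the product commutation), placing the $D^i$ in the target subcategory. To upgrade the Hom-exactness I would invoke self-orthogonality: $\Prod C^+$ is self-orthogonal by the $\Prod$-analogue of \cite[Propositions 2.6 and 2.7]{Wh}, hence $\T_M(\Prod C^+)$ is self-orthogonal by the symmetric version of Lemma 3.1. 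Then \cite[Lemma 4.1.1]{C} puts each image $L^i = \Im(D^{i-1} \to D^i)$ into ${}^\bot(\T_M(\Prod C^+))$, and part (1) in turn places each $L^i$ into ${}^\bot(\Prod \T_M(C^+))$. This upgrades the given Hom-exactness to $\Hom_\Lambda(-, \Prod \T_M(C^+))$-exactness, yielding $L \in \mathrm{cores} \widetilde{\Prod \T_M(C^+)}$.

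The main obstacle is the product-commutation of $\T_M$: it is the reason Prod behaves like Add here, and without the finitely generated projectivity of $M_\Gamma$ the argument collapses, since one would generally only have a natural map $\T_M(\prod Z_i) \to \prod \T_M(Z_i)$ rather than an isomorphism. Once that point is settled, the remainder is essentially parallel to Lemma 3.8, and a secondary concern reduces to recording the dual of \cite[Propositions 2.6 and 2.7]{Wh} to justify self-orthogonality of $\Prod C^+$.
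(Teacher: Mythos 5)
Your proposal is correct and follows essentially the same route as the paper: the paper's own proof consists precisely of the observation that $M_\Gamma$ being finitely generated projective forces $\T_M$ to commute with direct products (hence $\T_{M}(\Prod C^{+})\subseteq \Prod \T_{M}(C^{+})$), followed by the instruction to repeat the argument of Lemma 3.8. You have simply written out that "similar argument" in full, and your expanded details (summand-passage of Ext-vanishing in (1), and the self-orthogonality/\cite[Lemma 4.1.1]{C} chain in (2)) match what the paper intends.
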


\begin{proof}
Since $M_\Gamma$ is finitely generated projective, then 
$\T_{M}$ commutes with direct products and
$\T_{M}(\Prod C^{+})\subseteq$ $\Prod \T_{M}(C^{+})$.
By a similar argument of Lemma 3.8, one gets the assertion.
\end{proof}

\begin{proposition}
Let $_{\Lambda}C$ be a Wakamatsu tilting module with $\Gamma = \End(_{\Lambda}C)$,
and suppose that $\Lambda$ and $\Gamma$ are separably equivalent defined by 
$_{\Lambda}M_\Gamma$ and $_\Gamma N_\Lambda$ with  an adjoint pair $(-\otimes_{\Lambda}M, -\otimes_{\Gamma}N)$.

$(1)$ If $H\in$ $\mathscr{A}_{C}(\Gamma)$, then $\T_{M}(H) \in$ $\mathscr{A}_{C\otimes_{\Gamma}N}(\Lambda)$.

$(2)$ If $G\in$ $\mathcal{GI}_{C}(\Gamma)$, then $\T_{M}(G) \in$ $\mathcal{GI}_{C\otimes_{\Gamma}N}(\Lambda)$.
\end{proposition}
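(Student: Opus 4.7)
The plan is to follow the recipe of Corollary 3.10: combine the symmetric form of Theorem 3.3 (with the roles of $\Lambda$ and $\Gamma$ interchanged) with Lemma 3.11 and its dual, and then reinterpret the ambient subcategory through an identification coming from the adjoint pair hypothesis.

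For part (1), I begin from $H\in\mathscr{A}_C(\Gamma)=r\mathcal{G}(\Prod C^+)$ by Remark 2.6(2)(ii). Swapping $\Lambda$ with $\Gamma$ and $M$ with $N$ in Theorem 3.3(1) gives $\T_M(H)\in r\mathcal{G}(\T_M(\Prod C^+))$, and Lemma 3.11 upgrades this to $\T_M(H)\in r\mathcal{G}(\Prod \T_M(C^+))$.

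The critical input, and the only step that really uses the adjoint hypothesis, is a natural isomorphism $\T_M(C^+)\cong (C\otimes_\Gamma N)^+$ of left $\Lambda$-modules. The adjoint pair $(-\otimes_\Lambda M,-\otimes_\Gamma N)$ forces $-\otimes_\Gamma N\cong \Hom_\Gamma(M,-)$ by uniqueness of right adjoints, hence $N\cong \Hom_\Gamma(M_\Gamma,\Gamma)$ as $(\Gamma,\Lambda)$-bimodules. Since $M_\Gamma$ is finitely generated projective, this produces $M\otimes_\Gamma Z\cong \Hom_\Gamma(N,Z)$ for every left $\Gamma$-module $Z$; specializing to $Z=C^+$ and combining with the standard Hom--tensor adjunction $\Hom_\Gamma(N,C^+)\cong \Hom_{\mathbb{Z}}(C\otimes_\Gamma N,\mathbb{Q}/\mathbb{Z})=(C\otimes_\Gamma N)^+$ yields the identification. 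Thus $\Prod \T_M(C^+)=\Prod (C\otimes_\Gamma N)^+$, and unravelling Remark 2.6(2)(ii) gives $\T_M(H)\in r\mathcal{G}(\Prod (C\otimes_\Gamma N)^+)=\mathscr{A}_{C\otimes_\Gamma N}(\Lambda)$.

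Part (2) proceeds by the same template, with Theorem 3.3(2) in place of 3.3(1) and the $(-)^\perp/\mathrm{res}\widetilde{(-)}$-dual of Lemma 3.11 in place of 3.11. The latter is obtained by a word-for-word dualization of the proof of 3.11, using the dual of \cite[Lemma 4.1.1]{C}; the same isomorphism $\T_M(C^+)\cong (C\otimes_\Gamma N)^+$ then gives $\T_M(G)\in l\mathcal{G}(\Prod (C\otimes_\Gamma N)^+)=\mathcal{GI}_{C\otimes_\Gamma N}(\Lambda)$. The principal obstacle is establishing the isomorphism $\T_M(C^+)\cong (C\otimes_\Gamma N)^+$; everything else is a mechanical reassembly of results already available in the paper.
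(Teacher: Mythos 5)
Your proposal follows the paper's own route almost step for step: reduce via Remark 2.6(2)(ii) to $r\mathcal{G}(\Prod C^{+})$, apply the $\Gamma$-to-$\Lambda$ version of Theorem 3.3 and Lemma 3.11, and then convert $\T_{M}(C^{+})$ into $(C\otimes_{\Gamma}N)^{+}$ using the adjoint-pair hypothesis. Your derivation of that isomorphism (via $N\cong\Hom_{\Gamma^{op}}(M,\Gamma)$ and $M\otimes_{\Gamma}-\cong\Hom_{\Gamma}(N,-)$) is a correct, slightly longer variant of the paper's one-line chain $\T_{M}(C^{+})\cong(\Hom_{\Gamma^{op}}(M,C))^{+}\cong(C\otimes_{\Gamma}N)^{+}$, so that part is fine.

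There is, however, one missing step. The identification $r\mathcal{G}(\Prod D^{+})=\mathscr{A}_{D}(\Lambda)$ in Remark 2.6(2)(ii) is only available when $D$ is a Wakamatsu tilting module, so before you can ``unravel'' that remark with $D=C\otimes_{\Gamma}N$ you must verify that $C\otimes_{\Gamma}N$ is a Wakamatsu tilting right $\Lambda$-module --- otherwise the target class $\mathscr{A}_{C\otimes_{\Gamma}N}(\Lambda)$ is not even well defined in the sense used here. The paper supplies this by first noting that $C_{\Gamma}$ is Wakamatsu tilting (by Wakamatsu's symmetry result, \cite[Corollary 3.2]{Wa}) and then invoking Proposition 3.9 on the right-module side to conclude that $C\otimes_{\Gamma}N=\T_{N}(C_{\Gamma})$ is Wakamatsu tilting over $\Lambda$. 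Adding that one citation closes the gap; the rest of your argument, including the dualization in part (2), matches the paper's proof.
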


\begin{proof} 
(1) Let $H\in$ $\mathscr{A}_{C}(\Gamma)$.
From Remark 2.6(2) we know that $\mathscr{A}_{C}(\Gamma) = $ $r\mathcal{G}(\Prod C^{+})$.
Thus $\T_{M}(H)\in$ $r\mathcal{G}(\T_{M}(\Prod C^{+}))$ by Theorem 3.3(1).
From Lemma 3.11, one has $r\mathcal{G}(\T_{M}($ $\Prod C^{+}))$
$\subseteq$ $r\mathcal{G}(\Prod \T_{M}(C^{+}))$.

Because $(-\otimes_{\Lambda}M, -\otimes_{\Gamma}N)$ is an adjoint pair,
$\T_{M}(C^{+})\cong(\Hom_{\Gamma^{op}}(M, C))^{+}$ $\cong$ $(C\otimes_{\Gamma}N)^{+}$.
Note that $C_{\Gamma}$ is Wakamatsu tilting from \cite[Corollary 3.2]{Wa}, 
it follows from Proposition 3.9 that $C\otimes_{\Gamma}N$ is a Wakamatsu tilting right $\Lambda$-module.
Therefore, $\T_{M}(H)\in$ $r\mathcal{G}(\Prod (C\otimes_{\Gamma}N)^{+})$ $=\mathscr{A}_{C\otimes_{\Gamma}N}(\Lambda)$.

(2) Note that $\mathcal{GI}_{C}(\Gamma) = $ $l\mathcal{G}(\Prod C^{+})$.
The proof is similar to (1) by using the dual result of Lemma 3.11.
\end{proof}

\vspace{0.2cm}

\noindent {\bf Acknowledgements}

This research was partially supported by NSFC (Grant No. 12061026) and Foundation
for University Key Teacher by Henan Province (2019GGJS204).

\vspace{0.5cm}

%Question: Rep dim, Goren tilting are invariant under separa equi?

\end{document}